\newcommand{\R}{\mathbb{R}}
\newcommand{\ABR}{(Abramowitz and Stegun, 1965)}
\newcommand{\BBMP}{(Betta et al., 1999)}
\newcommand{\FK}{(Freitas and Krej\u{c}i\u{r}\'{i}k, 2015)}
\newcommand{\AHEx}{(Henrot, 2006)}
\newcommand{\LOS}{(Lacey et al., 1998)}
\newtheorem{proposition}{Proposition}
\newtheorem{theorem}{Theorem}
\theoremstyle{remark}
\lbrace\begin{array}{@{}l@{}}}%
\title{Some Remarks on the Robin-Laplacian Eigenvalues}
\begin{document}

\vspace*{45 mm}
\begin{center}
{\bf \fontsize{13}{13}\selectfont 
Some Remarks on Robin-Laplacian Eigenvalues}\\
\vspace{9pt}
Leonardo Trani \footnote{Dipartimento di Matematica e Applicazioni \lq\lq R. Caccioppoli\rq\rq, Universit\`a di Napoli \mbox{Federico II}, Via Cintia, 80126 Napoli, Italy. $\mathtt{leonardo.trani@unina.it}$}\\

\end{center}
\vspace{11pt}
{\fontsize{10}{10}\selectfont \noindent \textbf{Abstract} - We study some properties of Laplacian eigenvalues with negative Robin boundary conditions. We will show some monotonicity properties on annuli of the first eigenvalue by means of shape optimization techniques.}\\

{\fontsize{10}{10}\selectfont \noindent \textbf{Riassunto} - In questa nota si studiano gli autovalori del Laplaciano con condizioni al bordo di Robin negative. Mostreremo alcune proprietà di monotonia per il primo autovalore sull'anello attraverso l'uso di tecniche relative all'ottimizzazione di forma.}\\

{\fontsize{10}{10}\selectfont \noindent \textbf{Keywords:} Robin-Laplacian Eigenvalues, Shape Derivative, Spectral Inequality}\\

{\fontsize{10}{10}\selectfont \noindent \textbf{Mathematics Subject Classification (2010):} 35P15, 35J25}
\vspace{11pt}

\section{\fontsize{10}{10}\selectfont \bf 1 -  INTRODUCTION}
We consider the following eigenvalue problem
\begin{equation}\label{RobinLaplacianProblem}
\begin{cases}
-\Delta u=\lambda_i u & \mbox{in}\ \Omega\\
\frac{\partial u}{\partial\nu}=\alpha u & \mbox{on}\ \partial\Omega
\end{cases}
\end{equation}
where $\alpha > 0$ and
we investigate the monotonicity of the first eigenvalue $\lambda_1$ in the annulus, defined as $A_{r_1,r_2}=B_{r_2}\setminus{\overline{B}}_{r_1}$ for $r_1<r_2$, where $B_r$ is the open ball of radius $r$, following \FK, with respect to $r_2$.  We prove the following
\begin{theorem}\label{TH1}
Let $V_1$ be the following vectorial field in $\mathbb{R}^2$ 
\begin{equation}
V_1(x)= \begin{cases}
        \nu & \mbox{if}\ \vert x\vert=r_2\\
        0 & \mbox{otherwise}
        \end{cases}
\end{equation}
where $\nu$ is the unit outward normal vector of $\partial\Omega$, then 
$$
d\lambda_1\left(A_{r_1,r_2},V_1\right)>0\mbox{.}
$$
In particular, if $r_2<\tilde{r_2}$ than
$$
\lambda_1\left(A_{r_1,r_2}\right)<\lambda_1\left(A_{r_1,\tilde{r_2}}\right)\mbox{.}
$$
\end{theorem}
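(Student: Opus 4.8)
The plan is to compute the shape derivative through the Hadamard-type formula for the first Robin eigenvalue and then reduce the positivity of $d\lambda_1(A_{r_1,r_2},V_1)$ to an explicit spectral inequality. Since $\alpha>0$, the form $u\mapsto\int_\Omega|\nabla u|^2-\alpha\int_{\partial\Omega}u^2$ is bounded below on $H^1(\Omega)$ with compact resolvent, and on a connected domain $\lambda_1$ is simple with a positive eigenfunction $u$; in particular, on the annulus the eigenfunction is radial and $\lambda_1$ depends smoothly on the outer radius, so the classical shape calculus applies.

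First I would establish, for smooth bounded $\Omega$, a deformation field $V$ and the $L^2$-normalized first eigenfunction $u$, the formula
$$d\lambda_1(\Omega,V)=\int_{\partial\Omega}\Big(|\nabla_\tau u|^2-\big(\lambda_1+\alpha^2+\alpha\,\mathcal H\big)\,u^2\Big)(V\cdot\nu)\,d\sigma,$$
where $\nabla_\tau$ is the tangential gradient and $\mathcal H$ is the sum of the principal curvatures of $\partial\Omega$ (positive when $\Omega$ is convex). This comes from differentiating the Rayleigh quotient transported to the fixed domain (so that $\det D\Phi_t$, $(D\Phi_t)^{-1}(D\Phi_t)^{-T}$ and the surface Jacobian $\det D\Phi_t\,|(D\Phi_t)^{-T}\nu|$ appear), using that $u$ is the minimizer so that the derivative in the test function drops out, and converting the resulting volume integral into a boundary one by a Pohozaev-type identity together with $\Delta u=-\lambda_1 u$ and $\partial_\nu u=\alpha u$; the tangential integration by parts on the closed surface $\partial\Omega$ then produces the curvature term and cancels the remaining volume contributions.

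Plugging in $\Omega=A_{r_1,r_2}$ and $V=V_1$ (so that $V_1\cdot\nu\equiv1$ on $\{|x|=r_2\}$ and $\equiv0$ on $\{|x|=r_1\}$), using that $u$ is radial, hence $\nabla_\tau u\equiv0$ on $\partial A_{r_1,r_2}$, and that $\mathcal H=1/r_2$ on the outer circle, the formula collapses to
$$d\lambda_1(A_{r_1,r_2},V_1)=-\Big(\lambda_1(A_{r_1,r_2})+\alpha^2+\frac{\alpha}{r_2}\Big)\int_{\{|x|=r_2\}}u^2\,d\sigma,$$
so the theorem reduces to the strict inequality $\lambda_1(A_{r_1,r_2})<-\alpha^2-\alpha/r_2$. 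To obtain this I would pass to the radial variable and set $u(r)=r^{-1/2}\phi(r)$, which turns the radial equation into $-\phi''-\tfrac{1}{4r^2}\phi=\lambda\phi$ on $(r_1,r_2)$ with $\phi'(r_2)=\big(\alpha+\tfrac1{2r_2}\big)\phi(r_2)$ and $\phi'(r_1)=-\big(\alpha-\tfrac1{2r_1}\big)\phi(r_1)$, and then test the associated one-dimensional Rayleigh quotient with $\phi(r)=e^{\beta r}$, $\beta:=\alpha+\tfrac1{2r_2}$, which satisfies the outer boundary condition exactly. A direct computation should then show the quotient lies strictly below $-\beta^2+\tfrac1{4r_2^2}=-\alpha^2-\alpha/r_2$, the gap coming from the strictly negative potential $-\tfrac1{4r^2}<-\tfrac1{4r_2^2}$ on $(r_1,r_2)$. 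The hard part will be the inner boundary contribution $\big(\alpha-\tfrac1{2r_1}\big)\phi(r_1)^2$, whose sign is not fixed: when it is unfavourable one must refine the trial function, or instead run the comparison $\lambda_1(A_{r_1,r_2})<\lambda_1(B_{r_2})$ — obtained by testing the annulus quotient with the radial ground state of the disk $B_{r_2}$, for which the extra boundary term has a definite sign — together with the analogous estimate for the disk.

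Finally the pointwise statement follows by integration: for $\rho>r_1$ the map $\rho\mapsto\lambda_1(A_{r_1,\rho})$ is $C^1$ by simplicity of $\lambda_1$ and smooth dependence on $\rho$, with derivative $d\lambda_1(A_{r_1,\rho},V_1)>0$ by the above, so $\lambda_1(A_{r_1,\cdot})$ is strictly increasing and $\lambda_1(A_{r_1,r_2})<\lambda_1(A_{r_1,\tilde r_2})$ whenever $r_2<\tilde r_2$.
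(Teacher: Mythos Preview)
Your reduction agrees with the paper: the Hadamard formula you state is equivalent to the paper's (writing $|\nabla u|^2=|\nabla_\tau u|^2+\alpha^2u^2$ on $\partial\Omega$), and with $V=V_1$ and a radial eigenfunction both proofs arrive at
\[
d\lambda_1(A_{r_1,r_2},V_1)=2\pi r_2\,\phi^2(r_2)\Bigl(-\lambda_1-\alpha^2-\tfrac{\alpha}{r_2}\Bigr),
\]
so everything hinges on the strict inequality $\lambda_1(A_{r_1,r_2})<-\alpha^2-\alpha/r_2$.

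From this point on the approaches diverge, and yours has a real gap. Your test function $\phi(r)=e^{\beta r}$ with $\beta=\alpha+\tfrac{1}{2r_2}$ produces, in the one-dimensional Rayleigh quotient, the inner boundary contribution $-(\alpha-\tfrac{1}{2r_1})\phi(r_1)^2$; when $\alpha<\tfrac{1}{2r_1}$ this term is \emph{positive} and can offset the gain from the potential $-\tfrac{1}{4r^2}$, so the trial computation does not yield the desired bound in general. Your fallback, comparing with the disk, does give $\lambda_1(A_{r_1,r_2})<\lambda_1(B_{r_2})$ (the extra inner boundary term then has a definite sign because the disk ground state is radially increasing), but you still need $\lambda_1(B_{r_2})\le -\alpha^2-\alpha/r_2$, which you call ``the analogous estimate for the disk'' without proving it; this is not obvious from a trial-function argument either, and in fact requires the same kind of ODE analysis you are trying to avoid.

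The paper closes the gap with a short and uniform ODE argument that you may find cleaner than any variational trick. Set $z=\phi'/\phi$ (recall $\phi>0$); from the radial equation one gets the Riccati equation
\[
z'+z^2+\tfrac{z}{r}+\lambda_1=0\quad\text{on }(r_1,r_2),\qquad z(r_1)=-\alpha,\ z(r_2)=\alpha,
\]
so that $z'(r_2)=-(\lambda_1+\alpha^2+\alpha/r_2)$ and the task is exactly $z'(r_2)>0$. Since $z$ goes from $-\alpha$ to $\alpha$ there is a last zero $\xi\in(r_1,r_2)$, where $z'(\xi)=-\lambda_1>0$. If $z'$ vanished at some first point $\xi_1\in(\xi,r_2]$, differentiating the Riccati equation gives $z''(\xi_1)=z(\xi_1)/\xi_1^2>0$, contradicting the definition of $\xi_1$; hence $z'>0$ on $(\xi,r_2]$, in particular $z'(r_2)>0$. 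This argument works for all $\alpha>0$ and all $0<r_1<r_2$ simultaneously, which is precisely what your trial-function route fails to deliver.
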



On the other hand, we have observed that, when the parameter $\alpha=\sqrt[d]{\frac{\omega_d}{\vert\Omega\vert}}$, where
$$
\omega_d=\displaystyle{\frac{\pi^{\frac{d}{2}}}{\Gamma\left(1+\frac{d}{2}\right)}},
$$
problem (\ref{RobinLaplacianProblem}) on the ball is equivalent to the Stekloff-Laplacian problem, for which is known the value of the first non-trivial eigenvalue.
The statement is the following
\begin{theorem}\label{TH2}
Let $\Omega$ be a bounded open subset of $\R^d$ with Lipschitz boundary and let $B_r$ be the ball with the same measure as $\Omega$, that is $r = \sqrt[d]{\frac{\vert\Omega\vert}{\omega_d}}$.When $\alpha=\frac{1}{r}$ the following inequality holds
\begin{equation}\label{iso1/R}
\lambda_2(\Omega)\leq\lambda_2(B_r)=0.
\end{equation}
\end{theorem}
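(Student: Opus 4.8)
The plan is to combine the min--max formula for $\lambda_2$ with an explicit two-dimensional trial space spanned by the constants and one linear function; the estimate for that space will reduce to a sharp geometric inequality for $\partial\Omega$, which I would then prove by the divergence theorem together with a rearrangement argument. I would first record the situation on the ball: for $\alpha=1/r$ each coordinate function $u_i(x)=x_i$ is harmonic on $B_r$ and, since $\nu=x/r$ there, satisfies $\partial u_i/\partial\nu=x_i/r=\alpha u_i$, so $u_1,\dots,u_d$ are Robin eigenfunctions of eigenvalue $0$, while the constant trial function shows $\lambda_1(B_r)<0$. As recalled above, the correspondence with the Steklov problem --- for which $1/r$ is the first non-trivial eigenvalue of $B_r$ --- then gives $\lambda_2(B_r)=0$ (see \FK); so it remains to prove $\lambda_2(\Omega)\le 0$ for an arbitrary bounded Lipschitz set with $|\Omega|=\omega_d r^d$.

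For this I would use
\[
\lambda_2(\Omega)=\min_{\substack{E\subset H^1(\Omega)\\ \dim E=2}}\;\max_{0\ne u\in E}\;\frac{\displaystyle\int_\Omega|\nabla u|^2\,dx-\alpha\int_{\partial\Omega}u^2\,d\sigma}{\displaystyle\int_\Omega u^2\,dx},
\]
which is legitimate because the underlying form is bounded below on $H^1(\Omega)$ with compact resolvent (standard for a bounded Lipschitz domain, via the trace inequality). After translating $\Omega$ so that $\int_{\partial\Omega}x\,d\sigma=0$, which affects neither the eigenvalues nor $r$, I would take $E=\mathrm{span}\{1,\ell\}$ with $\ell(x)=x_i$ for an index $i$ to be chosen. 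For $u=s+t\ell$ one has $\int_\Omega|\nabla u|^2\,dx=t^2|\Omega|$ and $\int_{\partial\Omega}u^2\,d\sigma=s^2|\partial\Omega|+t^2\int_{\partial\Omega}x_i^2\,d\sigma$, the cross term vanishing by the centring, so the numerator of the Rayleigh quotient equals $-\alpha|\partial\Omega|\,s^2+\bigl(|\Omega|-\alpha\int_{\partial\Omega}x_i^2\,d\sigma\bigr)t^2$, which is $\le 0$ for all $(s,t)$ once $\int_{\partial\Omega}x_i^2\,d\sigma\ge|\Omega|/\alpha=r|\Omega|$. Since $\sum_{i=1}^d\int_{\partial\Omega}x_i^2\,d\sigma=\int_{\partial\Omega}|x|^2\,d\sigma$, an admissible index exists as soon as
\[
\int_{\partial\Omega}|x|^2\,d\sigma\geq d\,r\,|\Omega|,
\]
and then the Rayleigh quotient is $\le 0$ on all of $E$, whence $\lambda_2(\Omega)\le 0=\lambda_2(B_r)$.

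It remains to prove this geometric inequality, which I expect to be the crux of the argument. Applying the divergence theorem to the Lipschitz vector field $F(x)=\tfrac{1}{d+1}\,|x|\,x$, for which $\operatorname{div}F=|x|$ almost everywhere, and using $x\cdot\nu\le|x|$ on $\partial\Omega$, one gets
\[
\int_\Omega|x|\,dx=\frac{1}{d+1}\int_{\partial\Omega}|x|\,(x\cdot\nu)\,d\sigma\;\le\;\frac{1}{d+1}\int_{\partial\Omega}|x|^2\,d\sigma .
\]
On the other hand, since $|x|\ge r$ on $\Omega\setminus B_r$, $|x|\le r$ on $B_r\setminus\Omega$ and $|\Omega|=|B_r|$, rearrangement gives $\int_\Omega|x|\,dx\ge\int_{B_r}|x|\,dx=\tfrac{d}{d+1}\,\omega_d r^{d+1}=\tfrac{d}{d+1}\,r|\Omega|$; combining the two bounds yields $\int_{\partial\Omega}|x|^2\,d\sigma\ge d\,r|\Omega|$, with equality on $B_r$.

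The main obstacle is precisely this last inequality: the naive route --- the divergence theorem for $F(x)=x/d$ followed by Cauchy--Schwarz --- only gives $\int_{\partial\Omega}|x|^2\,d\sigma\ge d^2|\Omega|^2/|\partial\Omega|$, which then meets the isoperimetric inequality with the wrong sign; the remedy is to pass through the volume integral $\int_\Omega|x|\,dx$ and bound it from below by rearrangement rather than estimating the surface integral directly. The remaining points are routine: the validity of the min--max formula and of the divergence theorem on a merely Lipschitz domain, and the fact that $\lambda_2(B_r)$ equals $0$ exactly and not just $\le 0$, which is read off from the Steklov correspondence and \FK.
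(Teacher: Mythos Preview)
Your argument is correct and follows the same overall strategy as the paper: insert the two--dimensional space spanned by a constant and a coordinate function into the min--max characterisation, and reduce $\lambda_2(\Omega)\le 0$ to the geometric inequality $\int_{\partial\Omega}|x|^2\,d\sigma\ge d\,r\,|\Omega|$. The genuine difference lies in how that inequality is obtained. The paper does not prove it directly but invokes the weighted isoperimetric inequality of \BBMP, applied with $\psi(t)=t^2$, which immediately gives $\int_{\partial\Omega}|x+a|^2\,d\sigma\ge\int_{\partial B_r}|y|^2\,d\sigma=d\,r\,|\Omega|$. You instead give a self--contained proof: apply the divergence theorem to $F(x)=\frac{1}{d+1}|x|\,x$ and then use the elementary rearrangement bound $\int_\Omega|x|\,dx\ge\int_{B_r}|x|\,dx$. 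This is more elementary and avoids citing an external result, at the price of being tailored to the specific weight $|x|^2$, whereas the paper's route works verbatim for any weight satisfying the convexity hypothesis in \BBMP. A second, minor difference is organisational: the paper keeps the maximising constants $a_i$ and sums the Rayleigh inequalities over $i=1,\dots,d$, while you first translate so that $\int_{\partial\Omega}x\,d\sigma=0$ and then select a single index $i$ with $\int_{\partial\Omega}x_i^2\,d\sigma\ge r|\Omega|$; both reductions land on the same boundary integral $\int_{\partial\Omega}|x|^2\,d\sigma$.
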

\vspace{11pt}

\section{\fontsize{10}{10}\selectfont \bf 2 - NOTATIONS AND PRELIMINARIES}
In this section we recall some properties of the eigenvalues of the Laplacian. Let $\Omega$ be a bounded open subset of $\R^d$, the eigenvalues of (\ref{RobinLaplacianProblem}) form a sequence $\lambda_1\leq\lambda_2\leq\ldots\leq\lambda_m\leq\ldots$ such that $\lambda_m\rightarrow\infty$, and they can be characterized with \textit{min-max} formulation, that is 
\begin{equation}\label{minmax}
\lambda_m(\Omega)=\inf_{{E_m\subset H^1(\Omega)}_{ \dim E_m=m}}\left(\max_{v\in E_m\setminus\{0\}} \frac{\displaystyle\int_\Omega{\vert\nabla v\vert}^2dx-\alpha\displaystyle\int_{\partial\Omega}v^2d\sigma}{\displaystyle\int_\Omega v^2dx}\right).
\end{equation}
In particular, the first one is given by
\begin{equation}\label{max_l1}
\lambda_1(\Omega)=\inf_{v\in H^1(\Omega)\setminus\{0\}}\frac{\displaystyle\int_\Omega{\vert\nabla v\vert}^2dx-\alpha\displaystyle\int_{\partial\Omega}v^2d\sigma}{\displaystyle\int_\Omega v^2dx}.
\end{equation}
Using the costant as test function in the Rayleigh quotient (\ref{max_l1}), we obtain the following inequality, which allows to see that $\lambda_1(\Omega)<0$:
\begin{equation}\label{neg_eig}
\lambda_1(\Omega)\leq-\alpha\frac{\sigma(\partial\Omega)}{\vert\Omega\vert}, 
\end{equation}
where $\sigma(\Omega)$ stands for the $(d-1)$-dimensional Hausdorff measure of $\partial\Omega$ and $\vert\Omega\vert$ stands for the Lebesgue measure of $\Omega$. The above inequality implies that the first eigenvalue is not bounded from below when the volume is fixed.
As in \LOS one can see that the first eigenvalue is simple and has a positive associated eigenfunction.\\
Having in mind this fact, we obtain that the associated eigenfunction to problem (\ref{RobinLaplacianProblem}) on the annulus is radial, and then we can write problem (\ref{RobinLaplacianProblem}) as follows
\begin{equation}\label{radpr}
\begin{cases}
-\frac{1}{r^{d-1}}\left[r^{d-1}\phi^{\prime}(r)\right]^\prime&=\lambda_1(A_{r_1,r_2})\phi(r), \ r_1<r<r_2\\
-\phi^\prime(r_1)-\alpha\phi(r_1)&=0\\
\phi^\prime(r_2)-\alpha\phi(r_2)&=0
\end{cases}
\end{equation}
where $u_1(x)=\phi(\vert x\vert)$ is the first eigenfunction in $A_{r_1,r_2}$. The solutions of (\ref{radpr}) are given by
\begin{equation}\label{ei_fu}
\phi(r)=r^{-p}\left[C_1K_p(\sqrt{\lambda_1(A_{r_1,r_2})}r)+C_2I_p(\sqrt{\lambda_1(A_{r_1,r_2})}r)\right]\mbox{,}
\end{equation}
where $C_1$ and $C_2$ are implicite defined by the boundary conditions as in \FK, and where the functions $I_p$ and $K_p$ are modified Bessel functions of order $p$, see for istance \ABR, and
$$
p=\frac{d-2}{2}\mbox{.}
$$
For a long time, it was conjectured that balls maximize $\lambda_1$ among bounded open sets with given volume.  
Only recently, in \FK, the authors disprove such conjecture by showing that there exists an annulus, for which $\vert A_{r_1,r_2}\vert=\vert B_r\vert$ such that
$$
\lambda_1(A_{r_1,r_2})>\lambda_1(B_r)
$$
for $\alpha$ suitable great.
More precisely, they prove the following asymptotics for $\lambda_1$:
\begin{eqnarray}\label{asymp}
\lambda_1(A_{r_1,r_2})=-\alpha^2-\frac{\alpha}{r_2}+o(\alpha)\\
\lambda_1(B_r)=-\alpha^2-\frac{\alpha}{r}+o(\alpha)\mbox{.}\nonumber
\end{eqnarray}
In order to prove Theorem 1, we need to recall the classical Hadamard formula for $\lambda_1$, which is
\begin{equation}\label{derWagner}
d\lambda_1(\Omega, V)=\int_{\partial\Omega}{\left({\vert\nabla u_1\vert}^2-\lambda_1(\Omega)u_1^2-2\alpha^2u_1^2-\alpha Hu_1^2\right)(V\cdot \nu)}d\sigma
\end{equation}
where $\Omega\subset\mathbb{R}^2$ is smooth, $H$ is the mean curvature at a point $x$ of $\partial\Omega$, $\nu$ is the unit outward normal vector of boundary $\partial\Omega$ and $V$ is a smooth vector field defined on $\partial\Omega$.\\ 
For the proof of Theorem \ref{TH2}, we need the following weighted isoperimetric inequality from \BBMP :
\begin{theorem}
Let $\Omega$ be a bounded open subset of $\R^d$ with Lipschitz boundary, $B_r$ a ball, such that $\vert\Omega\vert=\vert B_r\vert$, and $\psi:[0,+\infty)\rightarrow[0,+\infty)$ a non-decreasing function such that
$$ 
\left(\psi(t^{\frac{1}{d}})-\psi(0)\right)t^{1-\frac{1}{d}}
$$
is convex for every $t\geq 0$ 
\begin{equation}\label{BrockMercaldo}
\int_{\partial\Omega}\psi(\vert x\vert)d\sigma\geq\int_{\partial B_r}\psi(\vert x\vert)d\sigma.
\end{equation}

\end{theorem}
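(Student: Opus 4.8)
The plan is to prove (\ref{BrockMercaldo}) by a calibration argument. Since $\psi(0)\ge 0$, I would first split $\psi(|x|)=\psi(0)+(\psi(|x|)-\psi(0))$; the constant part is controlled by the classical isoperimetric inequality, $\psi(0)\,\sigma(\partial\Omega)\ge\psi(0)\,\sigma(\partial B_r)$, so the task reduces to proving $\int_{\partial\Omega}(\psi(|x|)-\psi(0))\,d\sigma\ge\int_{\partial B_r}(\psi(|x|)-\psi(0))\,d\sigma$. For this, introduce $h(t):=(\psi(t^{1/d})-\psi(0))\,t^{1-1/d}$, which by hypothesis is convex on $[0,+\infty)$, satisfies $h(0)=0$, and obeys $h(\rho^{d})=(\psi(\rho)-\psi(0))\,\rho^{d-1}\ge0$; and consider the radial vector field
$$T(x):=|x|^{-d}\,h(|x|^{d})\,x,\qquad x\in\R^d .$$

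The next step is to record the three properties of $T$ that drive the argument. Writing $T=f(|x|)\,x/|x|$ with $f(\rho)=\rho^{1-d}h(\rho^{d})$ and using the standard identity $\operatorname{div}(f(\rho)\,x/|x|)=\rho^{1-d}(\rho^{d-1}f(\rho))'$, one finds $\operatorname{div}T(x)=d\,h'(|x|^{d})$, which is a non-decreasing function of $|x|$ precisely because $h'$ is non-decreasing — this is the one point where the convexity hypothesis enters. Secondly, $|T(x)|=h(|x|^{d})/|x|^{d-1}=\psi(|x|)-\psi(0)$ for every $x$. Thirdly, on $\partial B_r$ one has $T\cdot\nu=f(r)=\psi(r)-\psi(0)$. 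In addition $T$ is bounded, locally Lipschitz away from the origin, and $\operatorname{div}T\in L^{1}(\Omega)$, so the Gauss--Green formula is available on the Lipschitz set $\Omega$ (if $0\in\overline\Omega$ one removes a ball $B_\varepsilon$ about the origin and lets $\varepsilon\to0$, the flux across $\partial B_\varepsilon$ being $O(\varepsilon^{d-1})\to0$).

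The calibration chain is then
$$\int_{\partial\Omega}(\psi(|x|)-\psi(0))\,d\sigma=\int_{\partial\Omega}|T|\,d\sigma\ \ge\ \int_{\partial\Omega}T\cdot\nu\,d\sigma=\int_{\Omega}\operatorname{div}T\,dx=\int_{\Omega}d\,h'(|x|^{d})\,dx .$$
Since $d\,h'(|x|^{d})$ is non-decreasing in $|x|$ and $|\Omega\setminus B_r|=|B_r\setminus\Omega|$ (immediate from $|\Omega|=|B_r|$), a radially non-decreasing integrand has, at fixed volume, smallest integral over the centered ball, so $\int_{\Omega}d\,h'(|x|^{d})\,dx\ge\int_{B_r}d\,h'(|x|^{d})\,dx=\int_{B_r}\operatorname{div}T\,dx=\int_{\partial B_r}T\cdot\nu\,d\sigma=(\psi(r)-\psi(0))\,\sigma(\partial B_r)=\int_{\partial B_r}(\psi(|x|)-\psi(0))\,d\sigma$. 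Adding back the $\psi(0)$ term handled by the classical isoperimetric inequality gives (\ref{BrockMercaldo}).

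I expect the only real content to be the construction of $T$: the rest of the scheme is elementary, and it works solely because the hypothesis on $t\mapsto(\psi(t^{1/d})-\psi(0))\,t^{1-1/d}$ is exactly what makes $\operatorname{div}T$ monotone along rays — the single property needed for the comparison with the centered ball. The remaining ingredients (the reduction of the $\psi(0)$ part to the classical isoperimetric inequality, the equimeasurability identity $|\Omega\setminus B_r|=|B_r\setminus\Omega|$, and the harmless approximation near the origin) involve no new idea.
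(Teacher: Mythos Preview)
The paper does not actually prove this statement: it is quoted as a known tool from \BBMP\ and used later in the proof of Theorem~\ref{TH2}, so there is no ``paper's own proof'' to compare against. That said, your argument is correct and is essentially the calibration proof given in the original reference: the radial field $T(x)=|x|^{-d}h(|x|^{d})\,x$ with $h(t)=(\psi(t^{1/d})-\psi(0))\,t^{1-1/d}$ is precisely engineered so that $|T(x)|=\psi(|x|)-\psi(0)$ while $\operatorname{div}T(x)=d\,h'(|x|^{d})$ is radially non-decreasing by the convexity hypothesis, after which the divergence theorem plus the Hardy--Littlewood-type comparison $\int_\Omega \operatorname{div}T\ge\int_{B_r}\operatorname{div}T$ closes the loop. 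The only remarks worth adding are technical: since $h$ is merely convex you should read $h'$ as the (non-decreasing, everywhere defined) right derivative and interpret $\operatorname{div}T$ distributionally; and your claim that the flux through $\partial B_\varepsilon$ is $O(\varepsilon^{d-1})$ uses $d\ge 2$ together with the boundedness of $\psi$ near $0$, both of which hold here.
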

Another important remark in order to prove the Theorem \ref{TH2} is about the  eigenvalues of the Stekloff-Laplacian problem,
\begin{equation}\label{StekloffProblem}
\begin{cases}
-\Delta u= 0 & \mbox{in}\ \Omega\\
\frac{\partial u}{\partial\nu}=p_i u & \mbox{on}\ \partial\Omega
\end{cases}.
\end{equation}
where $\Omega$ is a bounded open set with Lipschitz boundary. The eigenvalues of (\ref{StekloffProblem}) form a sequence $0=p_1\leq p_2\leq\ldots\leq p_m\leq\ldots$ and they can be characterized, like in \AHEx , with the variational formulation
\begin{equation}
p_m(\Omega)=\min_{v\in H^1(\Omega)\setminus\{0\}}\left\{\frac{\displaystyle\int_{\Omega}\vert\nabla v\vert^2 dx}{\displaystyle\int_{\partial\Omega} v^2 d\sigma}\ :\  \int_{\partial\Omega} v u_id\sigma =0,\ i=1,\ldots,m-1 \right\}\mbox{,}
\end{equation}
where $u_i$ is the eigenfunction associated to the eigenvalue $p_i(\Omega)$.\\
It is known that are $p_2(B_r)=p_3(B_r)=\ldots=p_{d+1}(B_r)=\frac{1}{r}$ and the associated eigenfunctions are $\zeta_i(x)=x_{i-1}$ with $i=2,\ldots , d+1$. For that reason, choosing in problem (\ref{RobinLaplacianProblem}) $\alpha=p_2(B_r)=\frac{1}{r}$, we obtain $\lambda_2(B_r)=\lambda_3(B_r)=\ldots=\lambda_{d+1}(B_r)=0$. 

Before proceeding, applying (\ref{derWagner}) to the annulus in $\mathbb{R}^2$ of radii $r_1<r_2$, recalling that a volume preserving vector field is a smooth vector field $V:\Omega\subset\mathbb{R}^2\rightarrow\mathbb{R}^2$ such that

$$
\int_{\partial\Omega}(V,\nu)d\sigma=0\mbox{,}
$$

we obtain the following stationary condition:

\begin{proposition}

Let $A_{r_1,r_2}$ be an annulus of $\mathbb{R}^2$ and let $V$ be a volume preserving vector field in $A_{r_1,r_2}$, then

\begin{equation}\label{stacond}
\phi^2(r_2)\left(k^2 - \alpha^2 - \frac{\alpha}{r_2}\right)-\phi^2(r_1)\left(k^2 - \alpha^2 + \frac{\alpha}{r_1}\right)=0\Rightarrow d\lambda_1(A_{r_1,r_2}, V)=0
\end{equation}

where $\phi$ is the eigenfunction given in (\ref{ei_fu}), $k^2=-\lambda_1(A_{r_1,r_2})$ and $\alpha$ is the positive parameter in the Robin boundary condition.

\end{proposition}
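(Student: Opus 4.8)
The plan is to apply the Hadamard formula (\ref{derWagner}) directly, exploiting that for the annulus the boundary $\partial A_{r_1,r_2}$ is the disjoint union of the two circles $\{|x|=r_1\}$ and $\{|x|=r_2\}$, and that the first eigenfunction is radial, $u_1(x)=\phi(|x|)$ with $\phi$ as in (\ref{ei_fu}).

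First I would evaluate the integrand of (\ref{derWagner}) on each of the two circles. Since $u_1$ is radial, $\nabla u_1=\phi'(|x|)\,x/|x|$, so on $\{|x|=r_i\}$ both $|\nabla u_1|^2=\phi'(r_i)^2$ and $u_1^2=\phi(r_i)^2$ are constant. Using the boundary conditions in (\ref{radpr}), namely $\phi'(r_2)=\alpha\phi(r_2)$ and $\phi'(r_1)=-\alpha\phi(r_1)$, this gives $|\nabla u_1|^2=\alpha^2\phi(r_i)^2$ on each circle. The mean curvature of a circle of radius $r$ in $\mathbb{R}^2$, taken with respect to the outward normal of the annulus, is $H=1/r_2$ on the outer circle and $H=-1/r_1$ on the inner one, the sign flipping because the outward normal of $A_{r_1,r_2}$ points towards the origin on $\{|x|=r_1\}$. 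Substituting these identities, together with $\lambda_1(A_{r_1,r_2})=-k^2$, into (\ref{derWagner}) collapses the integrand to the constant $\phi^2(r_2)\big(k^2-\alpha^2-\tfrac{\alpha}{r_2}\big)$ on $\{|x|=r_2\}$ and to the constant $\phi^2(r_1)\big(k^2-\alpha^2+\tfrac{\alpha}{r_1}\big)$ on $\{|x|=r_1\}$.

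Pulling these constants out of the boundary integral I would then obtain
$$d\lambda_1(A_{r_1,r_2},V)=\phi^2(r_2)\Big(k^2-\alpha^2-\tfrac{\alpha}{r_2}\Big)\int_{|x|=r_2}(V\cdot\nu)\,d\sigma+\phi^2(r_1)\Big(k^2-\alpha^2+\tfrac{\alpha}{r_1}\Big)\int_{|x|=r_1}(V\cdot\nu)\,d\sigma.$$
Finally I would use that $V$ is volume preserving, i.e. $\int_{\partial A_{r_1,r_2}}(V\cdot\nu)\,d\sigma=0$, which gives $\int_{|x|=r_1}(V\cdot\nu)\,d\sigma=-\int_{|x|=r_2}(V\cdot\nu)\,d\sigma$, so that the right-hand side equals $\big[\phi^2(r_2)(k^2-\alpha^2-\tfrac{\alpha}{r_2})-\phi^2(r_1)(k^2-\alpha^2+\tfrac{\alpha}{r_1})\big]\int_{|x|=r_2}(V\cdot\nu)\,d\sigma$. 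Hence, whenever the bracket vanishes — which is exactly the hypothesis on the left of (\ref{stacond}) — one gets $d\lambda_1(A_{r_1,r_2},V)=0$, which is the claimed implication.

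The computation is essentially bookkeeping; the one point that needs care is the geometry of the inner boundary: there the outward normal of $A_{r_1,r_2}$ points towards the origin, so the mean curvature is $H=-1/r_1$ (not $+1/r_1$), and it is this sign that produces the $+\alpha/r_1$ inside the second parenthesis of (\ref{stacond}) once the volume constraint has been used to merge the two boundary integrals. Apart from that, I would remark at the outset that the annulus has smooth boundary and $u_1$ extends smoothly up to $\partial A_{r_1,r_2}$, so that (\ref{derWagner}) applies without modification.
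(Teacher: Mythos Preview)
Your proof is correct and follows essentially the same route as the paper: apply the Hadamard formula (\ref{derWagner}), use radiality and the Robin boundary conditions to reduce the integrand to the two constants $\phi^2(r_i)(k^2-\alpha^2\mp\alpha/r_i)$ on the two circles, and then invoke the volume-preserving condition to factor the result as the bracket in (\ref{stacond}) times $\int_{|x|=r_2}(V\cdot\nu)\,d\sigma$. Your explicit discussion of the sign $H=-1/r_1$ on the inner circle is a welcome clarification that the paper leaves implicit.
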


\begin{proof} By (\ref{derWagner})
\begin{eqnarray*}
d\lambda_1\left(A_{r_1,r_2}, V\right) & = & \int_{\partial A_{r_1,r_2}}\left(\left\vert\nabla u\right\vert^2 + k^2u^2 - 2\alpha^2 u^2 - \alpha Hu^2\right)\left(V\cdot \nu\right)ds\\ 
& = & \left(k^2 - \alpha^2 -\frac{\alpha}{r_2}\right)\phi^2\left(r_2\right)\int_{\partial B_{r_2}}(V\cdot \nu)ds \\ 
& + & \left(k^2 -\alpha^2 + \frac{\alpha}{r_1}\right)\phi^2(r_1)\int_{\partial B{r_1}}(V\cdot \nu)ds\mbox{,} 
\end{eqnarray*}
and, having in mind that the vectorial field $V$ is volume preserving, or equivalently
$$\int_{\partial A_{r_1,r_2}}(V\cdot \nu)ds=0 \Rightarrow\int_{\partial B_{r_1}}(V\cdot \nu )ds=-\int_{\partial B_{r_2}}(V\cdot \nu)ds
$$
and then
\begin{eqnarray*}
d\lambda_1(A_{r_1,r_2}, V) & = & 
\left[\phi^2(r_2)\left(k^2 - \alpha^2 - \frac{\alpha}{r_2}\right)\right.\\
& - &\left.\phi^2(r_1)\left(k^2 - \alpha^2 + \frac{\alpha}{r_1}\right)\right]\displaystyle\int_{\partial B_{r_2}}(V\cdot \nu)ds\mbox{,} 
\end{eqnarray*}
which implies (\ref{stacond}).
\end{proof}

Let 
$$
G(r_2)=\phi^2(r_2)\left(k^2 - \alpha^2 - \frac{\alpha}{r_2}\right)-\phi^2(r_1)\left(k^2 - \alpha^2 + \frac{\alpha}{r_1}\right)\mbox{,}
$$
using the volume constraint ${r_2}^2-{r_1}^2=C$ and the boundary conditions in (\ref{radpr}), we obtain
\begin{eqnarray*}
\frac{dG}{dr_2}(r_2) & = & 2\alpha\phi^2\left(r_2\right)\left(k^2 - \alpha^2 -\frac{\alpha}{r_2}+\frac{1}{2{r_2}^2}\right)\\
& + &\frac{2\alpha\phi^2\left(r_1\right)r_2}{r_1}\left(k^2 -\alpha^2+\frac{\alpha}{r_1}+\frac{1}{2{r_1}^2}\right).
\end{eqnarray*}
Using the asymptotics (\ref{asymp}), we have
$$
\frac{dG}{dr_2}(r_2)=2\alpha\phi^2(r_2)\left(\frac{1}{2r_2^2}+o(\alpha)\right)+\frac{2\alpha\phi^2(r_1)r_2}{r_1}\left(\frac{\alpha}{r_2}+\frac{\alpha}{r_1}+\frac{1}{2r_1^2}+o(\alpha)\right)
$$
and $\frac{dG}{dr_2}(r_2)$ is positive for $\alpha$ greater than a critical value, said $\alpha_c$.

\vspace{11pt}

\section{\fontsize{10}{10}\selectfont \bf 3 - PROOF OF THEOREM 1}
When $d=2$ (\ref{radpr}) becomes
\begin{equation}\label{ODE1}
\begin{cases}
\phi^{\prime\prime}(r)+\displaystyle\frac{\phi^{\prime}(r)}{r}+\lambda \phi(r)=0\\ \phi^{\prime}(r_1)=-\alpha\phi(r_1)\\ 
\phi^{\prime}(r_2)=\alpha\phi(r_2)
\end{cases}
\end{equation}
where $\lambda = \lambda_1(A_{r_1,r_2})$.\\
From (\ref{derWagner}) we obtain
\begin{equation}\label{derV1}
d\lambda\left(A_{r_1,r_2},V_1\right)=2\pi r_2\phi^2\left(r_2\right)\left(-\lambda-\alpha^2-\frac{\alpha}{r_2}\right)
\end{equation}
and using (\ref{derV1})
we can prove the statement by proving that
$$
\left(\lambda+\alpha^2+\frac{\alpha}{r_2}\right) < 0\mbox{.}
$$

Setting $z=\displaystyle\frac{\phi^\prime}{\phi}$ (having in mind that $\phi>0$), using (\ref{ODE1}), we obtain that $z$ satisfies
\begin{equation}\label{ODE2}
\frac{dz}{dr}+z^2+\frac{z}{r}+\lambda=0\ \mbox{in}\ (r_1,r_2)
\end{equation}
and then
$$
\frac{dz}{dr}(r_2)=-\left(\lambda+\alpha^2+\frac{\alpha}{r_2}\right)\mbox{.}
$$
From the boundary conditions in (\ref{ODE1}) we have $z(r_1)=-\alpha$ and $z(r_2)=\alpha$. Then defining

\begin{equation}\label{csi}
\xi=\sup\left\{\rho\in(r_1,r_2): z(\rho)<0\right\},
\end{equation}

we have that $\xi<r_2$ and $z(\xi)=0$, and using (\ref{ODE2}) we obtain that

\begin{equation}\label{dz/drxi>0}
\frac{dz}{dr}(\xi)=-\lambda >0.
\end{equation}
Our aim is to prove that $\frac{dz}{dr}(r_2)>0$. Let $\xi_1$ define by 
\begin{equation}\label{csi1}
\xi_1=\sup\left\{\rho\in (\xi,r_2) : \frac{dz}{dr}(\rho)>0\right\},
\end{equation}
by (\ref{csi}), we have $z(\xi_1)>0$, moreover, if $\xi_1<r_2$, by (\ref{csi1}) we have
$$
\frac{dz}{dr}(\xi_1)=0.
$$
Differentiating (\ref{ODE2}) we get  
$$
\frac{d^2z}{dr^2}(\xi_1)>0,
$$
which gives a contradiction. Then necessarily $\xi_1=r_2$ and by continuity $\frac{dz}{dr}(r_2)\geq 0$. If $\frac{dz}{dr}(r_2)=0$, differentiating (\ref{ODE2}), we obtain again 
$$
\frac{d^2z}{dr^2}(r_2)>0,
$$ 
but this is a contradiction to $r_2=\xi_1$.
This implies $\frac{dz}{dr}(r_2)>0$ and hence the theorem is proved.
\vspace{11pt}
\section{\fontsize{10}{10}\selectfont \bf 4 - WHAT HAPPENES TO $\lambda_1$ WHEN WE PINCH THE BALL?}
We know that, if $u_1$ is the eigenfunction of problem associated to $\lambda_1(B_r)$, we have
\begin{equation}
\lambda_1(B_r) = \frac{ \displaystyle\int_{B_r} \vert \nabla u_1 \vert^2 dx - \alpha \displaystyle\int_{\partial B_r} u_1^2 d\sigma}{ \displaystyle\int_{B_r} u_1^2 dx} = \frac{\displaystyle\int_{B_r} \vert\nabla u_1\vert^2 dx - n\alpha\omega_n u_1^2(r) r^n}{\displaystyle\int_{B_r} u_1^2 dx}\mbox{.}
\end{equation}
Let $\epsilon > 0$, we consider the annulus $A_{\epsilon, r'}$, with $r'>r$ such that $\vert A_{\epsilon,r'}\vert = \vert B_r \vert$ and let $u_1$ be the function in $H^1(B_{r'})$ defined by the following statement
\begin{equation}\label{def_w}
w(x)=
\begin{cases}
u_1(x) & \mbox{if}\ x\in B_r\\
u_1(r) & \mbox{if}\ x\in B_{r'}\setminus B_r.
\end{cases}
\end{equation}
We have
\begin{equation}\label{ma}
\lambda_1(A_{\epsilon , r'})  \leq   \frac{\displaystyle\int_{A_{\epsilon , r'}}\vert \nabla w \vert^2 dx - \alpha\displaystyle\int_{\partial A_{\epsilon , r'}} w^2 d\sigma}{\displaystyle\int_{A_{\epsilon , r'}} w^2 dx}
\end{equation} 
\begin{equation*}                              =     \frac{\displaystyle\int_{B_r'}\vert\nabla w\vert^2 dx - \displaystyle\int_{B_\epsilon} \vert\nabla w\vert^2 dx -\alpha\left(\displaystyle\int_{\partial B_{r'}} w^2 d\sigma + \displaystyle\int_{\partial B_{\epsilon}} w^2d\sigma \right)}{\displaystyle\int_{B_{r'}} w^2dx - \displaystyle\int_{B_\epsilon} w^2dx}.  
\end{equation*}
We have
\begin{equation}
\int_{B_{r'}}\vert\nabla w\vert^2 dx = \int_{B_r}\vert\nabla u_1\vert^2 dx,
\end{equation} 

\begin{equation}
\int_{B_\epsilon}\vert\nabla w\vert^2 dx = o(\epsilon^n),
\end{equation}

\begin{equation}
-\alpha\left(\int_{\partial B_{r'}}w^2 d\sigma +\int_{\partial B_\epsilon}w^2 d\sigma\right)= -n\alpha\omega_n r^{n-1}u_1^2(r) - \mathcal{O}(\epsilon^{n-1}), 
\end{equation}

\begin{equation}
\int_{B_{r'}}w^2 dx - \int_{B_\epsilon}w^2 dx = \int_{B_{r}}u_1^2 dx + \mathcal{O}(\epsilon^n).
\end{equation}
From (\ref{ma}) and the above equations, we have

\begin{equation}
\lambda_1(B_r)-\lambda_1(A_{\epsilon , r'}) \geq \frac{\mathcal{O}(\epsilon^{n-1})}{\displaystyle\int_{B_r} u_1^2 dx + \mathcal{O}(\epsilon^n)}
\end{equation}
then, for $\epsilon$ small enough, we have $\lambda_1(B_r) > \lambda_1(A_{\epsilon , r'})$.
\vspace{11pt}

\section{\fontsize{10}{10}\selectfont \bf 5 - PROOF OF THEOREM 2}
The min-max formulation (\ref{minmax}) for the second eigenvalue of problem (\ref{RobinLaplacianProblem}) allows to write 
\begin{equation}\label{ineq1}
\lambda_2(\Omega)\leq\max_{v\in E_2}\frac{\displaystyle\int_\Omega{\vert\nabla v\vert}^2dx-\frac{1}{r}\displaystyle\int_{\partial\Omega}v^2d\sigma}{\displaystyle\int_\Omega v^2dx}
\end{equation}
where $E_2$ is a 2-dimensional space of the $H^1(\Omega)$. We choose $E_2$ as the subspace spanned by the coordinate function $x_i$ and a costant function, and then, denoting by $a_i\in\R$ the constant achiving the maximum in (\ref{ineq1}), we have
\begin{eqnarray}\label{fracdis}
\lambda_2(\Omega) & \leq & \frac{\displaystyle\int_\Omega\vert\nabla(x_i+a_i)\vert^2 dx-\displaystyle\frac{1}{r}\displaystyle\int_{\partial\Omega}(x_i+a_i)^2d\sigma}{\displaystyle\int_\Omega (x_i+a_i)^2dx}\\
 & = & \frac{\vert\Omega\vert -\displaystyle\frac{1}{r}\displaystyle\int_{\partial\Omega}(x_i+a_i)^2d\sigma}{\displaystyle\int_\Omega(x_i+a_i)^2dx}\mbox{.}\nonumber
\end{eqnarray}


From (\ref{fracdis}), adding for every index, from $1$ to $d$, we obtain the following inequality

\begin{equation}\label{in_dim}
\lambda_2(\Omega) \leq \frac{d\vert\Omega\vert-\displaystyle\frac{1}{r}\displaystyle\int_{\partial\Omega}\vert x+a \vert^2 d\sigma}{\displaystyle\int_{\Omega}\vert x+a \vert^2 dx},
\end{equation}

and from that, by means of inequality (\ref{BrockMercaldo}), using a simple change of variables, we have

\begin{equation}
\lambda_2(\Omega) \leq \frac{d\vert\Omega\vert-\displaystyle\frac{1}{r}\displaystyle\int_{\partial B_r - a}\vert x+a \vert^2 d\sigma}{\displaystyle\int_{\Omega}\vert x+a \vert^2 dx} = 0 =\lambda_2(B_r),
\end{equation} 

and this completes the proof.
\section{\fontsize{10}{10}\selectfont \bf 6 - REFERENCES}
\indent Abramowitz M.S. and Stegun I.A. (1965), Handbook of Mathematical Functions. Dover, New York, USA

\indent Antunes P.R.S. and Freitas P. (2012) Numerical Optimization of Low Eigenvalues of the Dirichlet and Neumann Laplacians. J. Optim. Theory Appl. {\bf 154}, 235-257

\indent Bandle C. and Wagner A. (2014) Isoperimetric inequalities for the principal eigenvalue of a membrane and the energy of problems with Robin boundary conditions. arXiv:1403.3249v2 

\indent Betta M.F., Brock F., Mercaldo A. and Posteraro M.R. (1999) A weighted isoperimetric inequality and applications to symmetrizetions. J. of Inequal. and Appl. {\bf 4}, 215-240  

\indent Brasco L., De Philippis G. and Ruffini B. (2012) Spectral oprimization for the Stekloff-Laplacian: The stability issue. Journal of Functional Analysis. {\bf 262}, 4675-4710  

\indent Brock F. (2001) An Isoperimetric Inequality for Eigenvalues of the Stekloff Problem. Z. Angew. Math. Mech. {\bf 81}, 1, 69-71

\indent Ferone V., Nitsch C. and Trombetti C. (2015) On a conjectured reverse Faber-Krahn inequality for a Steklov-tipe Laplacian eigenvalue. Comunications on Pure and Applied Analysis. {\bf 14}, 1, 63-81

\indent Freitas P. and Krej\u{c}i\u{r}\'{i}k D. (2015) The first Robin eigenvalue with negative boundary parameter. Advanced in Mathematics. {\bf 280} 322-339

\indent Henrot A. (2006) Extremum Problems for Eigenvalues of Elliptic Operators. Birkh\:{a}user Verlag, Berlin, Germany

\indent Henrot A. (2003) Minimization problems for eigenvalues of the Laplacian. Journal of Evolution Equations. {\bf 3}, 443-461 

\indent Kennedy J. (2009) An isoperimetric inequality for the second eigenvalue of the Laplacian with Robin boundary condition. Proceeding of the American Mathematical Socity. {\bf 137}, 2, 627-633

\indent Lacey A.A., Ockendon J.R. and Sabina J. (1998) Multidimensional reaction diffusion equations with nonlinear boundary conditions. SIAM J. Appl. Math. {\bf 58}, 5, 1622-1647

\indent Nitsch C. (2014) An isoperimetric result for the fundamental frequency via domain derivative. Calculus of Variation and Partial Differential Equations. {\bf 49}, 1, 323-335

\indent Pankrashkin K. and Popoff N. (2015) Mean curvature bounds and eigenvalues of Robin Laplacians. Calculus of Variation and Partial Differential Equations. {\bf 54}, 2, 1947-1961 

\end{document}